\documentclass{amsart}

\oddsidemargin 6pt \evensidemargin 6pt \marginparwidth 48pt
\marginparsep 10pt
\topmargin -18pt \headheight 12pt \headsep 25pt  \footskip 30pt
\textheight 625pt \textwidth 431pt \columnsep 10pt \columnseprule 0pt

\usepackage[all]{xy}

\theoremstyle{plain}
\newtheorem{thm}{Theorem}

\newtheorem{cor}[thm]{Corollary}

\theoremstyle{definition}

\newtheorem{hypo}[thm]{Hypothesis}

\theoremstyle{remark}

\begin{document}

\title{Dihedral blocks with two simple modules}

\author{Frauke M. Bleher}
\address{Department of Mathematics\\University of Iowa\\
Iowa City, IA 52242-1419, U.S.A.}
\email{fbleher@math.uiowa.edu}
\thanks{The author was supported in part by NSF Grant DMS06-51332.}

\date{January 5, 2010}

\subjclass[2000]{Primary 20C05; Secondary 16G20}
\keywords{Dihedral defect groups, generalized quaternion defect groups,
projective general linear groups}

\begin{abstract}
Let $k$ be an algebraically closed field of characteristic $2$, and let $G$ be a finite group.
Suppose $B$ is a block of $kG$ with dihedral defect groups such that there are precisely two
isomorphism classes of simple $B$-modules. The description by Erdmann
of the quiver and relations of the basic algebra of $B$ is usually only given up to a certain 
parameter $c$ which is either $0$ or $1$. 
In this article, we show that $c=0$ if there exists a central extension $\hat{G}$ of $G$
by a group of order $2$ together with
a block $\hat{B}$ of $k\hat{G}$ with generalized quaternion defect groups
such that $B$ is contained in the image of $\hat{B}$ under the natural surjection from $k\hat{G}$ onto $kG$. 
As a special case, we obtain that $c=0$ if $G=\mathrm{PGL}_2(\mathbb{F}_q)$ for some odd prime
power $q$ and $B$ is the principal block of $k\,\mathrm{PGL}_2(\mathbb{F}_q)$.
\end{abstract}

\maketitle

%%%%%%%%%%%%%%%%%%%%%%%%%%%%%%%%%%%%%%%%%%%%%%%%%%%%%%%%%
%% Introduction
%%%%%%%%%%%%%%%%%%%%%%%%%%%%%%%%%%%%%%%%%%%%%%%%%%%%%%%%%

\section{Introduction}

Let $k$ be an algebraically closed field of arbitrary characteristic,
and let $\Lambda$ be a finite dimensional $k$-algebra.
One of the fundamental problems in representation theory is to determine the indecomposable
$\Lambda$-modules up to isomorphism. Depending on the complexity of the set of isomorphism
classes of indecomposable $\Lambda$-modules, one distinguishes between three representation types:
finite, tame and wild. If $G$ is a finite group and $B$ is a block of $kG$, then $B$ has finite 
representation type if and only if the defect groups of $B$ are cyclic, whereas $B$ has tame 
representation type if and only if the characteristic of $k$ is $2$ and the defect groups of $B$ are either
dihedral, semidihedral or generalized quaternion. 
In a series of articles culminating in the monograph
\cite{erd}, Erdmann described the quivers and relations of the
basic algebras of all blocks of group algebras of tame
representation type, by introducing the larger classes of algebras
of dihedral, semidihedral and quaternion type.
For example, if $k$ has characteristic 
$2$, then the class of all $k$-algebras $\Lambda$ of dihedral type includes, up to Morita equivalence, 
all blocks of $kG$ with dihedral defect groups.

Suppose now that $k$ has characteristic $2$. Let $\mathcal{B}_{\ell,d}$ consist of all
blocks $B$ of group algebras of finite groups over $k$ with dihedral defect groups of order $2^d$
and precisely $\ell$ isomorphism classes of simple $B$-modules. In \cite[Thm. 2]{brauer2}, 
Brauer showed 
that $\ell\le 3$. Let $\Lambda$ be a $k$-algebra of dihedral type such that $\Lambda$ is Morita 
equivalent to a block $B\in\mathcal{B}_{\ell,d}$. In \cite[Chap. VI]{erd}, the possible quivers for 
$\Lambda$ have been determined; it follows from the tables in \cite[p. 294--297]{erd}
that they only depend on the decomposition matrix of $B$. If $\ell=3$
then the relations of $\Lambda$ given in \cite[p. 294--297]{erd} only depend on $d$. If 
$\ell=2$ then the relations in \cite[p. 294--297]{erd} additionally depend on a parameter $c\in\{0,1\}$, 
and it is shown  in \cite[Sect. VI.8]{erd} that the algebras 
with the same quiver, the same $d$
but different $c$
are not Morita equivalent. 
If $B$ is a block in $\mathcal{B}_{2,d}$, it is usually difficult to decide whether $c=0$ or $c=1$.
To our knowledge there are only a few cases of blocks $B\in\mathcal{B}_{2,d}$ where $c$ has been 
determined,
such as the principal $2$-modular block of the symmetric group $S_4$ (see \cite[Cor. V.2.5.1]{erd})
or the principal $2$-modular blocks of certain quotients of  the general unitary group
$\mathrm{GU}_2(\mathbb{F}_q)$, where $q\equiv 3 \mod 4$ (see \cite[Sect.1.5]{erdlater}). 

In this paper, we consider the following situation, where as before $k$ has
characteristic $2$. Let $G$ be a finite group, and suppose $B$ is a block of $kG$ 
belonging to $\mathcal{B}_{2,d}$. Suppose further that there exists a central extension $\hat{G}$ of $G$ 
by a group of order $2$ together with a block $\hat{B}$ of $k\hat{G}$ with generalized quaternion defect groups
of order $2^{d+1}$ such that $B$ is contained in the image of $\hat{B}$ under the natural projection $\pi:k\hat{G}\to kG$. 
We will prove in Theorem \ref{thm:main} that in this case the parameter $c$ in the description of the 
relations in \cite{erd} must be $c=0$. If $\hat{G}$ is a finite group with 
generalized quaternion Sylow $2$-subgroups such that $\hat{G}$ has no normal subgroups of odd order,
it follows by a result of Brauer and Suzuki \cite{brsu} that the center of $\hat{G}$ has order $2$. Therefore, if
$G=\hat{G}/Z(\hat{G})$ and if the principal block of $kG$ belongs to $\mathcal{B}_{2,d}$, then the
corresponding parameter $c$ must also be zero (see Corollary \ref{cor:principal}). As a special case, we obtain that 
if $q$ is an odd prime power and $2^d$ is the maximal $2$-power dividing $(q^2-1)$, 
then the parameter of the principal block of $k\,\mathrm{PGL}_2(\mathbb{F}_q)$, which belongs to $\mathcal{B}_{2,d}$,
must also be $c=0$ (see Corollary \ref{cor:main}).

The main idea for the proof of Theorem \ref{thm:main} is as follows. 
Let $\hat{\Lambda}$ (resp. $\Lambda$) be the basic
algebra of $\hat{B}$ (resp.  $B$). Since $B$ belongs to $\mathcal{B}_{2,d}$, it follows that
there are precisely two isomorphism classes of simple $\hat{B}$-modules and their representatives 
are given by the inflations of simple $B$-modules. 
In particular, if $B$ is the principal block of $kG$, then $\hat{B}$ is the principal block of $k\hat{G}$.
By \cite[p. 303--304]{erd}, we know the quiver and relations
of $\hat{\Lambda}$ up to a parameter $\hat{c}\in k$. Since there is a surjective $k$-algebra homomorphism
$\pi_B:\hat{B}\to B$, there is a surjective $k$-algebra homomorphism $\pi_\Lambda:
\hat{\Lambda}\to \Lambda$ which is compatible with $\pi_B$ when we view $\hat{\Lambda}$
(resp. $\Lambda$) as a (non-unitary) subalgebra of $\hat{B}$ (resp. $B$). We then determine possible generators
of the kernel $\mathrm{Ker}(\pi_\Lambda)$. To find these generators, we use that
all isomorphism classes of indecomposable $\Lambda$-modules are known, since $\Lambda$ modulo
its socle is a special biserial algebra by \cite[Thm. VI.10.1]{erd}. This knowledge gives us enough control on
$\mathrm{Ker}(\pi_\Lambda)$ to be able to deduce from the relations for $\hat{\Lambda}$ and
from the generators of $\mathrm{Ker}(\pi_\Lambda)$ that the constant $c$ occurring in the relations for 
$\Lambda\cong \hat{\Lambda}/\mathrm{Ker}(\pi_\Lambda)$ in \cite{erd} must be zero.
The surprising result is that although much less is known about $2$-modular blocks with generalized 
quaternion defect groups than for $2$-modular blocks with dihedral defect groups, the 
relationship we establish between the basic algebras $\hat{\Lambda}$ and $\Lambda$
is enough to determine the parameter $c$ occurring in the description of $\Lambda$.

We finish this introduction with one application of our results.
Let $B\in \mathcal{B}_{2,d}$ be a block of $kG$ for some finite group $G$ 
and let $V$ be 
a finitely generated $kG$-module belonging to $B$ with stable endomorphism ring $k$.
If there exists a central extension $\hat{G}$ of $G$ by a group of order $2$  and a block $\hat{B}$
of $k\hat{G}$ with generalized quaternion defect groups as above,
then the universal deformation ring 
$R(G,V)$ is isomorphic to a subquotient ring of the group ring of a defect group $D$ of $B$
over the ring $W(k)$ of infinite Witt vectors over $k$ (see \cite{def2simple}).
In fact, the conclusion about the structure of $R(G,V)$
for all $V$ belonging to an arbitrary block $B$ in $\mathcal{B}_{2,d}$ with stable 
endomorphism ring $k$ is equivalent to
the statement that the parameter $c$ associated to each block $B$ in $\mathcal{B}_{2,d}$ 
is equal to $0$.
This was our original motivation in studying these parameters.  
In \cite[Question 1.1]{cyclic} we asked whether the conclusion 
about the structure of $R(G,V)$ above
holds for all $V$ having stable endomorphism
ring $k$, without any condition on the block to which $V$ belongs.   We believe that further study
of this question will lead either to a counterexample or to new results about the structure of
blocks of group rings of finite groups.

\medskip

For background on group algebras and blocks, we refer the reader to \cite[Chaps. 1, 2 and 7]{CR}.
For background on finite dimensional algebras, and in 
particular on algebras given by quivers and relations, we refer the reader to \cite[Chaps. I-III]{ars}.

\section{Dihedral blocks with two simple modules}

\label{s:principal}

Throughout this section we make the following assumptions.

\begin{hypo}
\label{hyp:setup}
Let $k$ be an algebraically closed field of characteristic $2$, and let $d\ge 3$ be a fixed integer.
Suppose $G$ is a finite group and $B$ is a block of $kG$ with dihedral defect groups of order $2^d$ 
such that there are precisely two isomorphism classes of simple $B$-modules. 
\end{hypo}

Under these hypotheses,
it follows from \cite[Chaps. VI, IX and p. 294--295]{erd} that there exists $c\in\{0,1\}$ 
and $i\in\{1,2\}$ such that the basic algebra of $B$ is isomorphic to the symmetric algebra
$\Lambda_{i,c}$, as defined in Figure \ref{fig:basic},
and the decomposition matrix of $B$ is as in Figure \ref{fig:decompB}.
In particular, if $q$ is an odd prime power, $2^d$ is the maximal $2$-power dividing $(q^2-1)$ 
and $B$ is the principal block of $k\,\mathrm{PGL}_2(\mathbb{F}_q)$, then there exists $c\in\{0,1\}$ such that
$B$ is Morita equivalent to $\Lambda_{1,c}$ $($resp. $\Lambda_{2,c}$$)$ if $q\equiv 1\mod 4$ 
$($resp. $q\equiv 3\mod 4$$)$.
\begin{figure}[ht] \hrule \caption{\label{fig:basic} The basic algebras 
$\Lambda_{1,c}=k\,Q_1/I_{1,c}$ and $\Lambda_{2,c}=k\,Q_2/I_{2,c}$.}
$$\begin{array}{rl}
\raisebox{-2ex}{$Q_1\quad=$}&\xymatrix @R=-.2pc {
0&1\\
\ar@(ul,dl)_{\alpha} \bullet \ar@<.8ex>[r]^{\beta} &\bullet\ar@<.9ex>[l]^{\gamma}}
\quad\raisebox{-2ex}{and}\\[5ex]
I_{1,c}\quad=&\langle \beta\gamma, \alpha^2-c\,(\gamma\beta\alpha)^{2^{d-2}},
(\gamma\beta\alpha)^{2^{d-2}}-(\alpha\gamma\beta)^{2^{d-2}}\rangle,\\[3ex]
\raisebox{-2ex}{$Q_2\quad=$}&\xymatrix @R=-.2pc {
0&1\\
\ar@(ul,dl)_{\alpha} \bullet \ar@<.8ex>[r]^{\beta} &\bullet\ar@<.9ex>[l]^{\gamma}
\ar@(ur,dr)^{\eta}}\quad\raisebox{-2ex}{and}\\[5ex]
I_{2,c}\quad=& \langle \eta\beta,\gamma\eta,\beta\gamma, \alpha^2-c\,\gamma\beta\alpha,
\gamma\beta\alpha-\alpha\gamma\beta,\eta^{2^{d-2}}-\beta\alpha\gamma\rangle.
\end{array}$$
\vspace{2ex}
\hrule
\end{figure}
\begin{figure}[ht] \hrule \caption{\label{fig:decompB} The decomposition matrix for a block $B$
of $kG$ that is Morita equivalent to $\Lambda_{1,c}$ (resp. $\Lambda_{2,c}$),
where $(*)$ means that the last row is repeated $(2^{d-2}-1)$ times.}
$$\begin{array}{c@{}c}
\left[\begin{array}{cc}1&0\\1&0\\1&1\\1&1\\2&1\end{array}\right]&
\begin{array}{c}\\ \\ \\ \\ (*)\end{array}\end{array}
\qquad\qquad \left(\mbox{resp.}\quad
\begin{array}{c@{}c}
\left[\begin{array}{cc}1&0\\1&0\\1&1\\1&1\\0&1\end{array}\right]
\begin{array}{c}\\ \\ \\ \\ (*)\end{array}\end{array}\right)$$
\vspace{2ex}
\hrule
\end{figure}

Let $i\in\{1,2\}$. The projective indecomposable $\Lambda_{i,c}$-module corresponding to the vertex 
$0$ (resp. $1$) of  the quiver $Q_i$ is generated by all paths in $kQ_i$ modulo $I_{i,c}$
that start at the vertex $0$ (resp. $1$). Since $\Lambda_{i,c}$ is a symmetric algebra,
it follows that every projective indecomposable $\Lambda_{i,c}$-module has a simple socle
(see for example \cite[Prop. 9.12]{CR}). 
An elementary combinatorial argument using the generators of the ideal $I_{1,c}$ shows 
that the element $(\gamma\beta\alpha)^{2^{d-2}}$
modulo $I_{1,c}$ is not zero and lies in the socle of $\Lambda_{1,c}$. Similarly,  
$(\beta\alpha\gamma)^{2^{d-2}}$ modulo $I_{1,c}$ is not zero and
lies in the socle of $\Lambda_{1,c}$. It follows that the socle of the projective
indecomposable  $\Lambda_{1,c}$-module corresponding to the vertex $0$ (resp. $1$) of 
the quiver $Q_1$ is generated by $(\gamma\beta\alpha)^{2^{d-2}}=(\alpha\gamma\beta)^{2^{d-2}}$
modulo $I_{1,c}$ (resp.  $(\beta\alpha\gamma)^{2^{d-2}}$ modulo $I_{1,c}$). 
Thus the radical series length of the projective 
indecomposable $\Lambda_{1,c}$-modules is at least $3\cdot 2^{d-2}+1$.  
Using the generators of the ideal $I_{1,c}$, it follows that all paths of length at least $3\cdot 2^{d-2}+1$
are zero modulo $I_{1,c}$. Hence the radical series length of the projective 
indecomposable $\Lambda_{1,c}$-modules is exactly $3\cdot 2^{d-2}+1$.  
Analyzing the projective indecomposable $\Lambda_{2,c}$-module corresponding to 
the vertex $0$ of the quiver $Q_2$, one similarly sees that its socle is generated by  
$\gamma\beta\alpha=\alpha\gamma\beta$ modulo $I_{2,c}$ and its radical series length is $4$. 
For the projective indecomposable $\Lambda_{2,c}$-module corresponding to the vertex $1$ of the 
quiver $Q_2$, one has that its socle is generated by $\eta^{2^{d-2}}=\beta\alpha\gamma$ modulo 
$I_{2,c}$ and its radical series length is $4\;$ (resp. $2^{d-2}+1$) if $d=3$ (resp. if $d>3$).

It follows from the definition of string algebras in \cite[Sect. 3]{buri} and from the above description of the
socle of $\Lambda_{i,c}$ for $i\in\{1,2\}$ that $\Lambda_{i,c}/\mathrm{soc}(\Lambda_{i,c})$ is a 
string algebra.
Therefore, one can see as in \cite[Sect. I.8.11]{erd} that
the isomorphism classes of all non-projective indecomposable 
$\Lambda_{i,c}$-modules are given by string and band modules as defined in \cite[Sect. 3]{buri}.

If $i\in\{1,2\}$ and $B$ is a block of $kG$ that is Morita equivalent to $\Lambda_{i,c}$,
we can use the decomposition matrix of $B$ in Figure \ref{fig:decompB} to compute the
composition series length of the projective indecomposable $B$-modules 
(see \cite[Thm. 18.26]{CR}). It follows that if $i=1$, then
the projective indecomposable $B$-module corresponding to the vertex $0$ (resp. $1$)
of the quiver $Q_1$ has $6+6\cdot (2^{d-2}-1)=6\cdot 2^{d-2}\;$ (resp. $4+3\cdot (2^{d-2}-1)
=1+ 3\cdot 2^{d-2}$) composition factors.
If $i=2$, then the projective indecomposable $B$-module corresponding to the vertex $0$ (resp. $1$)
of the quiver $Q_2$ has $6\;$ (resp. $4+(2^{d-2}-1)=3+2^{d-2}$) composition factors.

\begin{thm}
\label{thm:main}
Assume Hypothesis $\ref{hyp:setup}$. Suppose there exists a central extension $\hat{G}$ of $G$ 
by a group of order $2$ together with a block $\hat{B}$ of $k\hat{G}$ with generalized quaternion 
defect groups such that $B$ is contained in the image of $\hat{B}$ under the natural projection 
$\pi:k\hat{G}\to kG$. Then $B$ is Morita equivalent to either $\Lambda_{1,0}$  or $\Lambda_{2,0}$, i.e. 
the parameter $c$ is $c=0$.
\end{thm}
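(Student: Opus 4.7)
The plan is to realize $\Lambda$ as a quotient of the basic algebra $\hat{\Lambda}$ of $\hat{B}$, present both via quivers-with-relations using Erdmann's classification, and force $c=0$ by tracking how the relation for $\alpha^2$ descends under this surjection.

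First I would verify that Erdmann's classification applies to $\hat{B}$. Since $k$ has characteristic $2$ and the generator $z$ of $\ker(\hat{G}\to G)$ is central of order $2$, the element $z$ acts trivially on every simple $k\hat{G}$-module. Combined with the hypothesis $B\subseteq\pi(\hat{B})$, this matches the simple $\hat{B}$-modules with the simple $B$-modules via inflation, so $\hat{B}$ has exactly two simple modules. By \cite[p.~303--304]{erd} the basic algebra $\hat{\Lambda}$ is then one of a short list of quaternion-type algebras on two vertices, presented by a quiver parallel to $Q_i$ for some $i\in\{1,2\}$, with relations depending on a single parameter $\hat{c}\in k$. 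Next, lifting primitive orthogonal idempotents of $B$ through $\pi$ yields idempotents $\hat{e}_0,\hat{e}_1\in\hat{B}$; setting $\hat{e}=\hat{e}_0+\hat{e}_1$ and $e=\pi(\hat{e})$, the projection $\pi$ restricts to a surjective $k$-algebra homomorphism $\pi_\Lambda:\hat{\Lambda}=\hat{e}\hat{B}\hat{e}\to eBe\cong\Lambda$. After adjusting generators by scalars and by higher-radical terms, $\pi_\Lambda$ can be taken to send each quiver generator of $\hat{\Lambda}$ to the same-named arrow of $Q_i$.

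The crucial step is to pin down $\mathrm{Ker}(\pi_\Lambda)$ precisely enough to constrain $c$. By \cite[Thm.~VI.10.1]{erd} the algebra $\Lambda/\mathrm{soc}(\Lambda)$ is special biserial, so every non-projective indecomposable $\Lambda$-module is a string or band module in the sense of \cite[Sect.~3]{buri}, and the radical series lengths of the projective indecomposable $\Lambda$-modules have already been computed in the paragraph preceding the theorem (namely $3\cdot 2^{d-2}+1$ for $i=1$, and $4$ respectively $2^{d-2}+1$ for $i=2$). Consequently every path in $\hat{\Lambda}$ whose length exceeds these bounds must lie in $\mathrm{Ker}(\pi_\Lambda)$. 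A dimension count comparing the Cartan matrix of $\hat{B}$ with the one read off from Figure~\ref{fig:decompB} exhausts the kernel, yielding an explicit generating set for $\mathrm{Ker}(\pi_\Lambda)$ in terms of specific long paths of $\hat{\Lambda}$ and, where relevant, socle elements.

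With this in hand, I would project Erdmann's quaternion-side relations through $\pi_\Lambda$ one by one and match them against the defining relations of $\Lambda_{i,c}$. The decisive relation is the one governing $\alpha^2$: on the quaternion side it expresses $\alpha^2$ as a combination of long words in the arrows, one of them possibly scaled by $\hat{c}$, and by the length bound of the previous step each of these words lies in $\mathrm{Ker}(\pi_\Lambda)$, forcing $\alpha^2=0$ in $\Lambda$. Since the $\Lambda_{i,c}$-relation $\alpha^2=c\cdot(\text{socle path})$ has a nonzero socle element on the right, this forces $c=0$. The main obstacle is this last matching: correctly aligning arrow conventions between Erdmann's quaternion and dihedral presentations, and verifying for each $i\in\{1,2\}$ and each admissible value of $\hat{c}$ that the right-hand side of the quaternion $\alpha^2$-relation has all of its terms falling into $\mathrm{Ker}(\pi_\Lambda)$, with no short term that could sneak into the socle of $\Lambda$ and spuriously produce the $c=1$ case.
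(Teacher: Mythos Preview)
Your overall architecture matches the paper's, but the step ``after adjusting generators by scalars and by higher-radical terms, $\pi_\Lambda$ can be taken to send each quiver generator of $\hat{\Lambda}$ to the same-named arrow of $Q_i$'' is unjustified and is in fact the heart of the difficulty. The paper explicitly flags this: although $\pi_B$ is natural, the isomorphisms $\hat{f}_\Lambda$ and $f_\Lambda$ identifying the basic algebras with Erdmann's presentations are abstract Morita equivalences, so one has no explicit description of $\pi_\Lambda$ on arrows. You can certainly lift the arrows $\alpha,\beta,\gamma$ (and $\eta$) of $\Lambda_{i,c}$ through the surjection, but the resulting elements of $\hat{\Lambda}$ need not satisfy Erdmann's quaternion-type relations for any value of $\hat{c}$; conversely, there is no reason the given arrows of $\hat{\Lambda}_{i,\hat c}$ should land on the arrows of $\Lambda_{i,c}$ modulo $\mathrm{rad}^2$ only up to a scalar \emph{and} have the higher-order discrepancy absorbable into a change of presentation of $\hat{\Lambda}$. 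Consequently, knowing that $\alpha^2\in\mathrm{Ker}(\pi_\Lambda)$ (which is correct and which the paper also establishes) does \emph{not} yield $\alpha_\Lambda^2=0$ in $\Lambda_{i,c}$, because $\pi_\Lambda(\alpha)$ is only known to equal $a\alpha_\Lambda+r$ with $r\in\mathrm{rad}^2$.

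Relatedly, your claim that a dimension count ``exhausts the kernel, yielding an explicit generating set in terms of specific long paths'' is too optimistic: the long paths alone do not generate $\mathrm{Ker}(\pi_\Lambda)$ as an ideal. For $i=1$, for instance, the kernel contains an element of the form $\beta\gamma - c_1(\beta\alpha\gamma)^{2^{d-2}}$ with $\beta\gamma$ of length~$2$, and another of the form $(\gamma\beta\alpha)^{2^{d-2}} - c_2(\alpha\gamma\beta)^{2^{d-2}}$; neither is a consequence of ``all sufficiently long paths vanish.'' The paper finds these generators by a module-theoretic argument you omit: it builds uniserial $\hat{\Lambda}$-modules $M_w$ for short paths such as $w=\beta\gamma$, observes (via the special biserial structure of $\Lambda/\mathrm{soc}(\Lambda)$) that no $\Lambda$-module has the corresponding composition series, and deduces that some kernel element must act nontrivially on $M_w$, forcing an element with leading term $w$ into $\mathrm{Ker}(\pi_\Lambda)$. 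Only after these generators are pinned down does the dimension count show they generate the whole kernel; symmetry of the quotient then forces $c_2=1$ (resp.\ $c_3=1$), and an explicit change of variables identifies the quotient with $\Lambda_{i,0}$. That uniserial-module step is the missing idea in your plan.
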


\begin{proof}
Let $z\in\hat{G}$ be a central element of order $2$ such that $G$ is isomorphic to $\hat{G}/\langle z\rangle$.
In the following, we identify $G=\hat{G}/\langle z\rangle$. Let $\pi:k\hat{G}\to kG$ be the natural projection.
Since by assumption, $B$ is contained in the image of $\hat{B}$ under $\pi$, there is a simple $kG$-module $S$ 
belonging to $B$ whose inflation via $\pi$ belongs to $\hat{B}$. Let $T$ be an arbitrary simple $k\hat{G}$-module. 
Since $z$ has order $2$, it acts trivially on $T$. Thus $T$ is inflated 
via $\pi$ from a simple $kG$-module which we also denote by $T$. 
By  \cite[Thm. 56.12]{CR}, $T$ belongs to $B$ (resp. $\hat{B}$) if and only if 
there is a sequence of simple $kG$-modules (resp. simple $k\hat{G}$-modules)
$E_1,E_2,\ldots,E_n$ such that $E_1=S$, $E_n=T$, and for $1\le j<n$, 
$E_j$ and $E_{j+1}$ are equal or there is a non-split $G$-extension (resp. a non-split
$\hat{G}$-extension)
of one of them by the other. If $E$ and $E'$ are simple $kG$-modules whose inflations from $G$ to
$\hat{G}$ are also denoted by $E$ and $E'$, then
the Lyndon-Hochschild-Serre spectral sequence gives an exact sequence
$$0\to \mathrm{Ext}^1_{kG}(E,E')\to \mathrm{Ext}^1_{k\hat{G}}(E,E') 
\to \mathrm{Hom}_{kG}(E,E').$$
This implies immediately that $\mathrm{Ext}^1_{k\hat{G}}(E,E')$ and $\mathrm{Ext}^1_{kG}(E,E')$ have 
the same $k$-dimension for each choice of non-isomorphic $E$ and $E'$. 
Therefore, the above characterization of the simple modules which belong to $B$ (resp.
$\hat{B}$) implies that 
$\hat{B}$ has the same number of isomorphism classes of simple modules as $B$. 
We can thus identify the simple $\hat{B}$-modules with the simple $B$-modules. 
Therefore, the restriction of the natural projection $\pi$ to $\hat{B}$ gives a surjective $k$-algebra homomorphism
\begin{equation}
\label{eq:piB}
\pi_B:\hat{B}\to B.
\end{equation}
Moreover, since $\mathrm{Ker}(\pi)=(1+z)k\hat{G}$, it follows that $\mathrm{Ker}(\pi_B)=(1+z)\hat{B}$.
Thus if $\hat{P}$ is a projective indecomposable $\hat{B}$-module and $P=B\otimes_{\hat{B}}\hat{P}$
is the corresponding projective indecomposable $B$-module, 
then we have a short exact sequence of $\hat{B}$-modules
\begin{equation}
\label{eq:sesproj}
0\to (1+z)\hat{P} \to \hat{P}\to P \to 0.
\end{equation}
Since the map $f:(1+z)\hat{P}\to B\otimes_{\hat{B}} \hat{P}=P$, defined by $f((1+z)x)=1\otimes x$ for 
all $x\in \hat{P}$, is a $\hat{B}$-module isomorphism, $\hat{P}$ is a non-trivial $\hat{B}$-extension of
$P$ by itself. In particular,
$\hat{P}$ has twice as many composition factors as $P$.

Considering the possible decomposition matrices for blocks with generalized quaternion
defect groups and two isomorphism classes of simple modules as given in \cite[p. 303--304]{erd}
and using that the projective indecomposable $\hat{B}$-modules must have twice as
many composition factors as the corresponding projective indecomposable $B$-modules,
we conclude the following:
If $B$ is Morita equivalent to $\Lambda_{1,c}$ (resp.  $\Lambda_{2,c}$ ), then there
exists a constant $\hat{c}\in k$ such that the basic algebra of $\hat{B}$ is isomorphic to
$\hat{\Lambda}_{1,\hat{c}}$ (resp. $\hat{\Lambda}_{2,\hat{c}}$), as defined in Figure 
\ref{fig:basichat}, and the decomposition matrix of $\hat{B}$ is as in Figure \ref{fig:decomphatB}.
\begin{figure}[ht] \hrule \caption{\label{fig:basichat} The basic algebras 
$\hat{\Lambda}_{1,\hat{c}} =k\,Q_1/\hat{I}_{1,\hat{c}}$ and
$\hat{\Lambda}_{2,\hat{c}}=k\,Q_2/\hat{I}_{2,\hat{c}}$, where $Q_1$ and $Q_2$
are as in Figure \ref{fig:basic}.}
$$\begin{array}{rl} 
\hat{I}_{1,\hat{c}}\quad = & 
\langle \gamma\beta\gamma-\alpha\gamma(\beta\alpha\gamma)^{2^{d-1}-1},
\,\beta\gamma\beta-\beta\alpha(\gamma\beta\alpha)^{2^{d-1}-1},\\[1ex]
& \alpha^2-\gamma\beta(\alpha\gamma\beta)^{2^{d-1}-1}-\hat{c}\,(\alpha\gamma\beta)^{2^{d-1}},
\,\beta\alpha^2\rangle,\\[3ex] 
\hat{I}_{2,\hat{c}}\quad= & \langle \eta\beta - \beta\alpha(\gamma\beta\alpha),\,
\gamma\eta - \alpha\gamma(\beta\alpha\gamma), \,\beta\gamma - \eta^{2^{d-1}-1}, \\[1ex]
& \alpha^2-\gamma\beta(\alpha\gamma\beta)-\hat{c}\,(\alpha\gamma\beta)^2,\,
\beta\alpha^2\rangle.
\end{array}$$
\vspace{2ex}
\hrule
\end{figure}
\begin{figure}[ht] \hrule \caption{\label{fig:decomphatB} The decomposition matrix for a block $\hat{B}$
of $k\hat{G}$ that is Morita equivalent to $\hat{\Lambda}_{1,\hat{c}}$ (resp. 
$\hat{\Lambda}_{2,\hat{c}}$), where $(**)$ means that the last row is repeated $(2^{d-1}-1)$ times.}
$$\begin{array}{c@{}c}
\left[\begin{array}{cc}1&0\\1&0\\1&1\\1&1\\0&1\\2&1\end{array}\right]&
\begin{array}{c}\\ \\ \\ \\ \\ (**)\end{array}\end{array}
\qquad\qquad \left(\mbox{resp.}\quad
\begin{array}{c@{}c}
\left[\begin{array}{cc}1&0\\1&0\\1&1\\1&1\\2&1\\0&1\end{array}\right]
\begin{array}{c}\\ \\ \\ \\ \\ (**)\end{array}\end{array}\right)$$
\vspace{2ex}
\hrule
\end{figure}

As we did for $B$, we can use the decomposition matrix of $\hat{B}$ in Figure 
\ref{fig:decomphatB} to compute the composition series length of the projective 
indecomposable $\hat{B}$-modules.  If $\hat{B}$ is Morita equivalent to 
$\hat{\Lambda}_{1,\hat{c}}$, then the projective indecomposable $\hat{B}$-module 
corresponding to the vertex $0$ (resp. $1$) of the quiver $Q_1$ has 
$6+6\cdot (2^{d-1}-1)=6\cdot 2^{d-1}\;$ (resp. $5+3\cdot (2^{d-1}-1)=2+ 3\cdot 2^{d-1}$) 
composition factors.
If $\hat{B}$ is Morita equivalent to $\hat{\Lambda}_{2,\hat{c}}$, then the projective 
indecomposable $\hat{B}$-module corresponding to the vertex $0$ (resp. $1$)
of the quiver $Q_2$ has $12\;$ (resp. $7+(2^{d-1}-1)=6+2^{d-1}$) composition factors.

Let $E$ and $F$ be non-isomorphic simple $B$-modules.
Then the inflations of $E$ and $F$ to $\hat{B}$ are representatives of the isomorphism
classes of simple $\hat{B}$-modules, which we again denote by $E$ and $F$.
Let $\hat{e}$ and $\hat{f}$ be two orthogonal primitive idempotents in $\hat{B}$ such that
$\hat{B}\hat{e}$ (resp. $\hat{B}\hat{f}$) is a projective $\hat{B}$-module cover of
$E$ (resp. $F$).  Then $e=\pi_B(\hat{e})$ and $f=\pi_B(\hat{f})$ are
orthogonal primitive idempotents in $B$, 
and $Be$ (resp. $Bf$) is a projective $B$-module
cover of $E$ (resp. $F$). Let $\hat{\varepsilon}=\hat{e}+\hat{f}$ and $\varepsilon=e+f$.
Then $\pi_B$ from $(\ref{eq:piB})$ restricts to a surjective $k$-algebra homomorphism
\begin{equation}
\label{eq:pie}
\pi_\varepsilon: \hat{\varepsilon}\hat{B}\hat{\varepsilon} \to \varepsilon B \varepsilon.
\end{equation}
Let $i\in\{1,2\}$, $\hat{c}\in k$ and $c\in \{0,1\}$ be such that there are $k$-algebra isomorphisms
$\hat{f}_{\Lambda}:\hat{\varepsilon}\hat{B}\hat{\varepsilon} \to \hat{\Lambda}_{i,\hat{c}}$
and $f_\Lambda: \varepsilon B \varepsilon \to \Lambda_{i,c}$. Then
there exists a surjective $k$-algebra homomorphism
\begin{equation}
\label{eq:pil}
\pi_\Lambda: \hat{\Lambda}_{i,\hat{c}} \to  \Lambda_{i,c}
\end{equation}
such that the 
diagram
in Figure \ref{fig:diagram} commutes.
\begin{figure}[ht] \hrule \caption{\label{fig:diagram} The commutative diagram relating
$\hat{B}$, $\hat{\Lambda}_{i,\hat{c}}$, $B$ and $\Lambda_{i,c}$.}
$$\xymatrix  {
\hat{B} \ar[rr]^{\pi_B} && B\\
\hat{\varepsilon}\hat{B}\hat{\varepsilon} \ar[rr]^{\pi_\varepsilon}\ar@{^(->}[u] && \varepsilon B \varepsilon\ar@{^(->}[u]\\
\hat{\Lambda}_{i,\hat{c}} \ar[rr]^{\pi_\Lambda} \ar[u]^{\hat{f}_{\Lambda}}\ar[rd]_{\mathrm{nat.proj.}}&&
\Lambda_{i,c}\ar[u]_{f_\Lambda}\\
&\hat{\Lambda}_{i,\hat{c}} /\mathrm{Ker}(\pi_\Lambda)\ar[ur]_{\overline{\pi}_\Lambda}\\
&\Lambda'_{i,c}\ar@{=}[u]&
}$$
\vspace{2ex}
\hrule
\end{figure}
Here $\mathrm{nat.proj.}$ stands for the natural projection of $\hat{\Lambda}_{i,\hat{c}}$
onto $\hat{\Lambda}_{i,\hat{c}}/\mathrm{Ker}(\pi_\Lambda)$
and $\overline{\pi}_\Lambda$
is the $k$-algebra isomorphism induced by $\pi_\Lambda$.
As in Figure \ref{fig:diagram}, define 
$\Lambda'_{i,c}=\hat{\Lambda}_{i,\hat{c}}/\mathrm{Ker}(\pi_\Lambda)$.
We introduce this extra notation, since in the following we will need to distinguish between
the isomorphic $k$-algebras $\Lambda_{i,c}$ and $\Lambda'_{i,c}$.

We next show that the kernel of $\pi_\Lambda$ is contained in the square of the radical of 
$\hat{\Lambda}_{i,\hat{c}}$. This basically follows since $\hat{\Lambda}_{i,\hat{c}}$
and $\Lambda_{i,c}$ have the same quiver $Q_i$. However, for the convenience of the reader, 
we now give a more detailed explanation.
Since $\pi_\Lambda$ is surjective, $\pi_\Lambda(\mathrm{rad}^n(\hat{\Lambda}_{i,\hat{c}}))
\subseteq \mathrm{rad}^n(\Lambda_{i,c})$ for all $n\ge 1$ by \cite[Prop. 5.6]{CR}.
Thus $\pi_\Lambda$ induces a surjective $k$-algebra homomorphism 
$\pi_{\Lambda,n}:\hat{\Lambda}_{i,\hat{c}}/\mathrm{rad}^n(\hat{\Lambda}_{i,\hat{c}})\to
 \Lambda_{i,c}/\mathrm{rad}^n(\Lambda_{i,c})$ for all $n\ge 1$. 
Let $kQ_i^+$ be the ideal of $kQ_i$ generated by all arrows.
By \cite[Prop. III.1.6]{ars}, $\mathrm{rad}(\hat{\Lambda}_{i,\hat{c}})$ (resp.  
$\mathrm{rad}(\Lambda_{i,c})$) is the image of $kQ_i^+$ in 
$\hat{\Lambda}_{i,\hat{c}}$ (resp.  $\Lambda_{i,c}$).
Because $\hat{I}_{i,\hat{c}}$ (resp. $I_{i,c}$) is contained in $(kQ_i^+)^2$,
it follows that for $n\in\{1,2\}$ there is an isomorphism between $kQ_i/(kQ_i^+)^n$ and
$\hat{\Lambda}_{i,\hat{c}}/\mathrm{rad}^n(\hat{\Lambda}_{i,\hat{c}})$ (resp.
$\Lambda_{i,c}/\mathrm{rad}^n(\Lambda_{i,c})$). Hence $\pi_{\Lambda,n}$ is a $k$-algebra 
isomorphism for $n\in\{1,2\}$, which implies $\mathrm{Ker}(\pi_\Lambda)\subset  
\mathrm{rad}^2(\hat{\Lambda}_{i,\hat{c}})$. In particular, \cite[Prop. III.1.6]{ars} implies that
the radical of $\Lambda'_{i,c}$ is the image of $kQ_i^+$ in $\Lambda'_{i,c}$. 

Even though $\pi_B$ in $(\ref{eq:piB})$ is explicit and natural and the domain and range of
$\pi_\Lambda$ in $(\ref{eq:pil})$ are explicitly given, we do not have an explicit description of 
$\pi_\Lambda$, owing to the fact that Morita equivalences, given by $f_\Lambda$ and 
$\hat{f}_\Lambda$, were used to define $\pi_\Lambda$ in Figure \ref{fig:diagram}. 
Therefore, we need to show that 
$\pi_\Lambda$ matches up the simple modules corresponding to the vertices of $Q_i$.
Let $S_0$ (resp. $S_1$) be the simple $\Lambda'_{i,c}$-module corresponding
to the vertex $0$ (resp. $1$) in $Q_i$. Then the inflations of $S_0$ and $S_1$ to
$\hat{\Lambda}_{i,\hat{c}}$ are the simple $\hat{\Lambda}_{i,\hat{c}}$-modules,
which also correspond to the vertices $0$ and $1$ of $Q_i$, respectively.
Let $T_0$ (resp. $T_1$) be the simple  $\Lambda_{i,c}$-modules corresponding to the
vertex $0$ (resp. $1$) in $Q_i$. 
We computed the composition series lengths of the projective indecomposable 
$\Lambda_{i,c}$-modules $P(T_0)$ and $P(T_1)$ 
in the paragraph before the statement of Theorem \ref{thm:main},
and we computed the composition series lengths of the projective 
indecomposable $\hat{\Lambda}_{i,\hat{c}}$-modules $\hat{P}(S_0)$ and $\hat{P}(S_1)$ 
in the paragraph after defining $\hat{\Lambda}_{1,\hat{c}}$ and $\hat{\Lambda}_{2,\hat{c}}$.
Comparing these composition series lengths,
we see that $\hat{P}(S_u)$ has twice as many composition factors as
$P(T_v)$ if and only if $u=v$. Thus the Morita equivalence between $\Lambda'_{i,c}$
and $\Lambda_{i,c}$ induced by the isomorphism $\overline{\pi}_\Lambda$ must send
$S_u$ to $T_u$ for $u\in\{0,1\}$.

To complete the proof of Theorem \ref{thm:main}, 
we will make use of $\hat{\Lambda}_{i,\hat{c}}$-modules corresponding to 
certain paths in $kQ_i$. Let $w=\zeta_n\cdots\zeta_2\zeta_1$ be a path of length $n\ge 1$ in $kQ_i$ 
whose image modulo $\hat{I}_{i,\hat{c}}$ does not 
lie in $\mathrm{soc}_2(\hat{\Lambda}_{i,\hat{c}})$. 
For $1\le j\le n$, let $v_j$ be the end vertex of $\zeta_j$, and let $v_0$  be the starting
vertex of $\zeta_1$. 
Define a $kQ_i$-module $M_w$ of
$k$-dimension $n+1$ with respect to a given $k$-basis $\{b_0,\ldots,b_n\}$ as follows. 
Let $0\le j\le n$.
If $v$ is a vertex in $Q_i$, define $v b_j=b_j$ if $v=v_j$, and $v b_j=0$ otherwise. If
$\zeta$ is an arrow in $Q_i$, define $\zeta b_j=b_{j+1}$ if $\zeta=\zeta_{j+1}$ and $j\le n-1$, 
otherwise define $\zeta b_j=0$. 
By our assumption on $w$, the ideal $\hat{I}_{i,\hat{c}}$ of $kQ_i$ acts as zero on $M_w$.
Hence $M_w$ defines a $\hat{\Lambda}_{i,\hat{c}}$-module, which we also denote by $M_w$.
Moreover, $M_w$ is a uniserial $\hat{\Lambda}_{i,\hat{c}}$-module with descending
composition factors $S_{v_0},S_{v_1},\ldots,S_{v_n}$.

Suppose first that $i=1$. 
As seen in the second paragraph after Hypothesis \ref{hyp:setup}, the radical series
lengths of the projective $\Lambda_{1,c}$-module covers of $T_0$ and $T_1$ 
are both $3\cdot 2^{d-2}+1$.
Using the Morita equivalence between $\Lambda'_{1,c}$ and 
$\Lambda_{1,c}$ induced by $\overline{\pi}_\Lambda$, it follows that
the radical series lengths of the projective $\Lambda'_{1,c}$-module covers of $S_0$ and $S_1$
are also both $3\cdot 2^{d-2}+1$.
Since both the radical of $\hat{\Lambda}_{1,\hat{c}}$ and the radical of $\Lambda'_{1,c}$ are generated by arrows, all paths in $kQ_1$ of length greater than or equal to $3\cdot 2^{d-2}+1$ modulo 
$\hat{I}_{1,\hat{c}}$ must lie in $\mathrm{Ker}(\pi_\Lambda)$. In particular,
$$\gamma\beta(\alpha\gamma\beta)^{2^{d-1}-1}+\hat{c}\,(\alpha\gamma\beta)^{2^{d-1}}
\quad \mbox{modulo $\hat{I}_{1,\hat{c}}$}$$
lies in $\mathrm{Ker}(\pi_\Lambda)$. Thus it follows from the description of $\hat{I}_{1,\hat{c}}$ 
in Figure \ref{fig:basichat} that $\alpha^2$ modulo $\hat{I}_{1,\hat{c}}$ lies in 
$\mathrm{Ker}(\pi_\Lambda)$. Using similar arguments, we see that
\begin{equation}
\label{eq:ohyeah1}
\alpha^2,\beta\gamma\beta, \gamma\beta\gamma,
(\alpha\gamma\beta)^{2^{d-2}}\alpha,(\gamma\beta\alpha)^{2^{d-2}}\gamma,
(\beta\alpha\gamma)^{2^{d-2}}\beta \quad \mbox{modulo $\hat{I}_{1,\hat{c}}$}
\end{equation}
all lie in $\mathrm{Ker}(\pi_\Lambda)$. 
Taking the path $\beta\gamma$ in $kQ_1$, we can define the uniserial 
$\hat{\Lambda}_{1,\hat{c}}$-module $M_{\beta\gamma}$ as above
with descending composition factors $S_1,S_0,S_1$.
Using that $\Lambda_{1,c}/\mathrm{soc}(\Lambda_{1,c})$ is a string algebra, we see that
there is no uniserial $\Lambda_{1,c}$-module with descending composition factors
$T_1,T_0,T_1$. Because of the Morita equivalence between $\Lambda'_{1,c}$
and $\Lambda_{1,c}$ induced by $\overline{\pi}_\Lambda$, it follows 
that there is also no uniserial $\Lambda'_{1,c}$-module with descending composition 
factors $S_1,S_0,S_1$.
Therefore, there must exist an element $x\in \mathrm{Ker}(\pi_\Lambda)$ that acts non-trivially
on $M_{\beta\gamma}$. The only elements of $\hat{\Lambda}_{1,\hat{c}}$ that act
non-trivially on $M_{\beta\gamma}$ are $k$-linear combinations of the paths
$1_0$, $1_1$, $\beta$, $\gamma$ and $\beta\gamma$ modulo $\hat{I}_{1,\hat{c}}$.
Since $\mathrm{Ker}(\pi_\Lambda) \subset \mathrm{rad}^2(\hat{\Lambda}_{1,\hat{c}})$,
it follows that $x=\beta\gamma + x'$ modulo $\hat{I}_{1,\hat{c}}$ 
where $x'$ is a (possibly zero) $k$-linear combination of paths 
$z$ of length at least 2 that are different from $\beta\gamma$.
Multiplying $x$ on both sides with $1_1$ modulo $\hat{I}_{1,\hat{c}}$, we
may assume that all the paths $z$ coming up in the description of $x'$ both start and end at
the vertex $1$. Moreover, we may omit all paths $z$ that modulo $\hat{I}_{1,\hat{c}}$ lie
in $\mathrm{Ker}(\pi_\Lambda)$ according to $(\ref{eq:ohyeah1})$. Thus we may assume that
$x'$ is a $k$-linear combination of $(\beta\alpha\gamma)^j$ for  
$1\le j\le 2^{d-2}$. It follows that there exist constants $a_1,\ldots,a_{2^{d-2}}$ in $k$  such that
\begin{equation}
\label{eq:desperate1}
x\;=\;\beta\gamma \;+\; \sum_{j=1}^{2^{d-2}} a_j \,(\beta\alpha\gamma)^j\quad
\mbox{modulo $\hat{I}_{1,\hat{c}}$}
\end{equation}
lies in $\mathrm{Ker}(\pi_\Lambda)$.
Suppose not all $a_j$ are zero, and let $1\le j_0\le 2^{d-2}$ be minimal with 
$a_{j_0}\neq 0$. If $j_0< 2^{d-2}$, we can multiply $x$ on the left with
$\gamma(\beta\alpha\gamma)^{2^{d-2}-j_0-1}$ modulo $\hat{I}_{1,\hat{c}}$ (resp. on the right with 
$(\beta\alpha\gamma)^{2^{d-2}-j_0-1}\beta$ modulo $\hat{I}_{1,\hat{c}}$)
and use $(\ref{eq:ohyeah1})$ to see that
$$\gamma(\beta\alpha\gamma)^{2^{d-2}-1}\quad\mbox{and}\quad
(\beta\alpha\gamma)^{2^{d-2}-1}\beta \qquad \mbox{modulo $\hat{I}_{1,\hat{c}}$}$$
lie in $\mathrm{Ker}(\pi_\Lambda)$. But this implies that all paths of length $3\cdot2^{d-2}$ 
modulo $\hat{I}_{1,\hat{c}}$ lie in
$\mathrm{Ker}(\pi_\Lambda)$, and
hence $\mathrm{rad}^{3\cdot2^{d-2}}(\Lambda'_{1,c})=0$, which is a contradiction. 
Thus $a_j=0$ for $1\le j< 2^{d-2}$ and we obtain 
\begin{equation}
\label{eq:nice1}
\pi_\Lambda\left(\beta\gamma-c_1 \,((\beta\alpha\gamma)^{2^{d-2}}\;
\mbox{modulo $\hat{I}_{1,\hat{c}}$}\right) = 0
\end{equation}
for some $c_1\in k$. 
Using that $\Lambda_{1,c}/\mathrm{soc}(\Lambda_{1,c})$ is a string algebra, we see that
there is no uniserial $\Lambda_{1,c}$-module of length $3\cdot 2^{d-2}+1$ with descending 
composition factors
$$(T_0,T_0,T_1,T_0,T_0,\ldots,T_1,T_0,T_0,T_1,T_0)$$
$$\mbox{(resp. $(T_0,T_1,T_0,T_0,T_1,\ldots,T_0,T_0,T_1,T_0,T_0)$)}.$$
Because of the Morita equivalence between $\Lambda'_{1,c}$
and $\Lambda_{1,c}$ induced by $\overline{\pi}_\Lambda$, we see that
there is also no uniserial $\Lambda'_{1,c}$-module of length $3\cdot 2^{d-2}+1$ with descending
composition factors
$$(S_0,S_0,S_1,S_0,S_0,\ldots,S_1,S_0,S_0,S_1,S_0)$$
$$\mbox{(resp. $(S_0,S_1,S_0,S_0,S_1,\ldots,S_0,S_0,S_1,S_0,S_0)$)}.$$
On the other hand, we can define a uniserial $\hat{\Lambda}_{1,\hat{c}}$-module
$M_w$ corresponding to the path $w=(\gamma\beta\alpha)^{2^{d-2}}$ 
(resp. $w=(\alpha\gamma\beta)^{2^{d-2}}$) in $kQ_1$ with such descending composition factors.
Thus there must exist an element $y\in \mathrm{Ker}(\pi_\Lambda)$ that acts non-trivially
on $M_w$. The only elements of $\hat{\Lambda}_{1,\hat{c}}$ that act
non-trivially on $M_w$ are $k$-linear combinations of 
$1_0$, $1_1$, $\alpha$, $\beta$, $\gamma$ and subpaths of $w$ of length at least $2$ 
modulo $\hat{I}_{1,\hat{c}}$.
Since $\mathrm{Ker}(\pi_\Lambda) \subset \mathrm{rad}^2(\hat{\Lambda}_{1,\hat{c}})$,
there exists a subpath $w'$ of $w$ of length at least $2$ such that 
$y=w' + y'$ modulo $\hat{I}_{1,\hat{c}}$ and $y'$ is a (possibly zero) $k$-linear combination of paths 
$z\neq w'$ of length at least 2.
Since $w=w_1 w' w_2$ for certain subpaths $w_1$ and $w_2$ of $w$, we can multiply $y$ on the 
left with $w_1$ modulo $\hat{I}_{1,\hat{c}}$ and on the right with $w_2$ modulo $\hat{I}_{1,\hat{c}}$
to be able to assume that $w'=w$.
Multiplying $y$ on both sides with $1_0$ modulo $\hat{I}_{1,\hat{c}}$, we
may assume that all the paths $z$ coming up in the description of $y'$ both start and end at
the vertex $0$. Moreover, we may omit all paths $z$ that modulo $\hat{I}_{1,\hat{c}}$ lie
in $\mathrm{Ker}(\pi_\Lambda)$ according to $(\ref{eq:ohyeah1})$. 
If $\tilde{w}$ is the path such that $\{w,\tilde{w}\} = \{(\gamma\beta\alpha)^{2^{d-2}},
(\alpha\gamma\beta)^{2^{d-2}}\}$, we may thus assume that
$y'$ is a $k$-linear combination of $\tilde{w}$, $(\gamma\beta\alpha)^j$ and $(\alpha\gamma\beta)^j$
for $1\le j\le 2^{d-2}-1$. It follows that there exist constants $a,a_{j,1}, a_{j,2}$ in $k$ for
$1\le j\le 2^{d-2}-1$ such that
\begin{equation}
\label{eq:desperate2}
y\;=\;w \;+\; a\, \tilde{w} \;+\;\sum_{j=1}^{2^{d-2}-1} a_{j,1} \,(\gamma\beta\alpha)^j
\;+\; \sum_{j=1}^{2^{d-2}-1} a_{j,2}\, (\alpha\gamma\beta)^j\qquad
\mbox{modulo $\hat{I}_{1,\hat{c}}$}
\end{equation}
lies in $\mathrm{Ker}(\pi_\Lambda)$.
Arguing in a similar way to $(\ref{eq:nice1})$, we see that all $a_{j,1}=0=a_{j,2}$ for $1\le j\le 2^{d-2}-1$.
We obtain
\begin{eqnarray}
\label{eq:nice2}
\pi_\Lambda\left((\gamma\beta\alpha)^{2^{d-2}})-c_2\, (\alpha\gamma\beta)^{2^{d-2}}\;
\mbox{modulo $\hat{I}_{1,\hat{c}}$}\right) &=& 0,\\
\label{eq:nice3}
\pi_\Lambda\left((\alpha\gamma\beta)^{2^{d-2}})-c_3\, (\gamma\beta\alpha)^{2^{d-2}}\;
\mbox{modulo $\hat{I}_{1,\hat{c}}$}\right) &=& 0
\end{eqnarray}
for certain $c_2,c_3\in k$. 
If one of $c_2,c_3$ were zero, we could also choose the other one to be zero. This would imply that
the radical series length of the projective $\Lambda'_{1,c}$-module cover of $S_0$ is at most
$3\cdot2^{d-2}$, which is a contradiction. 
Thus $c_2,c_3$ have to be non-zero. Therefore,
there exist certain $c_1\in k$, $c_2\in k^*$ such that
$\mathrm{Ker}(\pi_\Lambda)$ contains the image $\hat{J}_{c_1,c_2}$ in $\hat{\Lambda}_{1,\hat{c}}$ 
of the ideal 
$$J_{c_1,c_2}=\langle \alpha^2, \beta\gamma-c_1(\beta\alpha\gamma)^{2^{d-2}},
(\gamma\beta\alpha)^{2^{d-2}}-c_2(\alpha\gamma\beta)^{2^{d-2}}\rangle \;\subset \; kQ_1.$$
We can compute the $k$-dimension of $kQ_1/I_{1,c}=\Lambda_{1,c}$ (resp. of $kQ_1/J_{c_1,c_2}
\cong \hat{\Lambda}_{1,\hat{c}}/\hat{J}_{c_1,c_2}$) by counting $k$-linearly independent paths in 
$kQ_1$ modulo $I_{1,c}$ (resp. modulo $J_{c_1,c_2}$). Since these 
$k$-dimensions are equal, it follows that $\mathrm{Ker}(\pi_\Lambda)=\hat{J}_{c_1,c_2}$ and thus
$\Lambda'_{1,c}\cong kQ_1/J_{c_1,c_2}$. 
Because $\Lambda'_{1,c}\cong kQ_1/J_{c_1,c_2}$ is symmetric, there exists a $k$-linear map
$\varphi:kQ_1/J_{c_1,c_2}\to k$ such that $\varphi(ab)=\varphi(ba)$ for all $a,b\in kQ_1/J_{c_1,c_2}$
and such that $\mathrm{Ker}(\varphi)$ does not contain any non-zero left ideal of $kQ_1/J_{c_1,c_2}$.
As in the second paragraph after Hypothesis \ref{hyp:setup}, one sees that the socle of the
projective indecomposable $kQ_1/J_{c_1,c_2}$-module corresponding to the vertex $0$ is
generated by $(\gamma\beta\alpha)^{2^{d-2}}$ modulo $J_{c_1,c_2}$. It follows that
$$\varphi\left((\gamma\beta\alpha)^{2^{d-2}}\; \mbox{modulo $J_{c_1,c_2}$}\right) =
\varphi\left((\alpha\gamma\beta)^{2^{d-2}}\; \mbox{modulo $J_{c_1,c_2}$}\right)$$
is non-zero, which implies $c_2=1$.
Hence $\Lambda'_{1,c}\cong kQ_1/J_{c_1,1}$ for a certain $c_1\in k$.
Since the $k$-algebra homomorphism
\begin{eqnarray*}
\Lambda_{1,0}=kQ_1/I_{1,0} &\longrightarrow & kQ_1/J_{c_1,1}\\
z \quad \mbox{modulo $I_{1,0}$}&\mapsto & z\quad \mbox{modulo $J_{c_1,1}$}\qquad\qquad
\mbox{ for $z\in\{1_0,1_1,\alpha,\beta\}$},\\
\gamma\quad \mbox{modulo $I_{1,0}$}&\mapsto &
\gamma - c_1\alpha\gamma(\beta\alpha\gamma)^{2^{d-2}-1}\quad \mbox{modulo $J_{c_1,1}$}
\end{eqnarray*}
is a $k$-algebra isomorphism for all $c_1\in k$, it follows that $\Lambda'_{1,c}\cong\Lambda_{1,0}$.
Therefore, the parameter $c$ must be equal to zero.

Suppose next that $i=2$. As seen in the second paragraph after Hypothesis \ref{hyp:setup}, 
the radical series length of the projective $\Lambda_{2,c}$-module cover of $T_0$ (resp. $T_1$) 
is $4$ (resp. $4$ if $d=3$ and $2^{d-2}+1$ if $d>3$). Using the Morita equivalence between
$\Lambda'_{2,c}$ and $\Lambda_{2,c}$ induced by $\overline{\pi}_\Lambda$, it follows that
the radical series length of the projective $\Lambda'_{2,c}$-module cover of $S_0$ (resp. $S_1$) 
is also $4$ (resp. $4$ if $d=3$ and $2^{d-2}+1$ if $d>3$). 
Since both the radical of $\hat{\Lambda}_{2,\hat{c}}$ and the radical of $\Lambda'_{2,c}$ are generated by arrows, all paths in $kQ_2$ starting at vertex $0$ of length greater than or
equal to $4$ modulo $\hat{I}_{2,\hat{c}}$ must lie in $\mathrm{Ker}(\pi_\Lambda)$. Also, 
if $d=3$ (resp. if $d>3$), then all paths in $kQ_2$ starting at vertex $1$ of length greater 
than or equal to $4$ (resp. $2^{d-2}+1$) modulo $\hat{I}_{2,\hat{c}}$ must lie in 
$\mathrm{Ker}(\pi_\Lambda)$. 
Thus we can argue similarly to $(\ref{eq:ohyeah1})$, using
the description of $\hat{I}_{2,\hat{c}}$ in Figure \ref{fig:basichat}, that 
\begin{eqnarray}
\label{eq:ohyeah2d3}
\alpha(\gamma\beta\alpha), \beta(\alpha\gamma\beta), 
\alpha^2, \eta\beta, \gamma\eta, \gamma(\beta\alpha\gamma),\eta^4\;
&\mbox{modulo $\hat{I}_{2,\hat{c}}$}& \mbox{ if $d=3$}\\
\label{eq:ohyeah2}
(\,\mbox{resp. }\; \alpha(\gamma\beta\alpha), \beta(\alpha\gamma\beta), 
\alpha^2, \eta\beta, \gamma\eta,\beta\gamma, \eta^{2^{d-2}+1}
&\mbox{modulo $\hat{I}_{2,\hat{c}}$}& \mbox{ if $d>3$}\; )
\end{eqnarray}
all lie in $\mathrm{Ker}(\pi_\Lambda)$. 
We first show that for $d=3$, we also have that $\beta\gamma$ and $\eta^{2^{d-2}+1}$ 
modulo $\hat{I}_{2,\hat{c}}$ lie in $\mathrm{Ker}(\pi_\Lambda)$. 
Using that $\Lambda_{2,c}/\mathrm{soc}(\Lambda_{2,c})$ is a string algebra, we see that if $d=3$,
there is no uniserial $\Lambda_{2,c}$-module  with descending composition factors $T_1,T_1,T_1$.
Because of the Morita equivalence between $\Lambda'_{2,c}$ and $\Lambda_{2,c}$ induced by 
$\overline{\pi}_\Lambda$, there is then also no uniserial $\Lambda'_{2,c}$-module with descending 
composition factors $S_1,S_1,S_1$. However, we can define the uniserial 
$\hat{\Lambda}_{2,\hat{c}}$-module $M_{\eta^2}$ corresponding to the path $\eta^2$ in $kQ_2$
with such descending composition factors.  Since $\beta\gamma = \eta^3$ modulo 
$\hat{I}_{2,\hat{c}}$ if $d=3$, we can use $(\ref{eq:ohyeah2d3})$ to argue similarly to 
$(\ref{eq:desperate1})$ that there are constants $b_1,b_2$ in $k$ such that
\begin{equation}
\label{eq:ohsodesperate}
\eta^2\;+\;b_1\,\beta\alpha\gamma\;+\;b_2\,\eta^3 \qquad
\mbox{modulo $\hat{I}_{2,\hat{c}}$}
\end{equation}
lies in $\mathrm{Ker}(\pi_\Lambda)$ if $d=3$. Multiplying $(\ref{eq:ohsodesperate})$ on the left with
$\eta$ modulo $\hat{I}_{2,\hat{c}}$ and using that $\eta\beta$ and $\eta^4$ 
modulo $\hat{I}_{2,\hat{c}}$
lie in $\mathrm{Ker}(\pi_\Lambda)$ by $(\ref{eq:ohyeah2d3})$, we see that $\eta^3$, and thus 
$\beta\gamma$, modulo $\hat{I}_{2,\hat{c}}$ lie in $\mathrm{Ker}(\pi_\Lambda)$ if $d=3$. 
Thus it follows from $(\ref{eq:ohyeah2d3})$ and $(\ref{eq:ohyeah2})$ that for all $d\ge 3$,
\begin{equation}
\label{eq:ohyeah3}
\alpha(\gamma\beta\alpha), \beta(\alpha\gamma\beta), 
\alpha^2, \eta\beta, \gamma\eta,\beta\gamma, \eta^{2^{d-2}+1}
\qquad \mbox{modulo $\hat{I}_{2,\hat{c}}$}
\end{equation}
all lie in $\mathrm{Ker}(\pi_\Lambda)$.
Using that $\Lambda_{2,c}/\mathrm{soc}(\Lambda_{2,c})$ is a string algebra, we see that
there is no uniserial $\Lambda_{2,c}$-module  with descending 
composition factors $T_1,T_0,T_0,T_1$. Because of the Morita equivalence between 
$\Lambda'_{2,c}$ and $\Lambda_{2,c}$ induced by $\overline{\pi}_\Lambda$, we see that
there is also no uniserial $\Lambda'_{2,c}$-module with descending composition factors
$S_1,S_0,S_0,S_1$. 
On the other hand, we can define the uniserial $\hat{\Lambda}_{2,\hat{c}}$-module
$M_{\beta\alpha\gamma}$ corresponding to the path $\beta\alpha\gamma$ in $kQ_2$
with such descending composition factors.
Thus we can argue similarly to $(\ref{eq:desperate1})$ and $(\ref{eq:desperate2})$,
using $(\ref{eq:ohyeah3})$, that there exist constants $a_2,\ldots,a_{2^{d-2}}$ in $k$ such that
\begin{equation}
\label{eq:desperate3}
\beta\alpha\gamma\;+\;\sum_{j=2}^{2^{d-2}} a_j \,\eta^j\qquad
\mbox{modulo $\hat{I}_{2,\hat{c}}$}
\end{equation}
lies in $\mathrm{Ker}(\pi_\Lambda)$.
Suppose that not all $a_j$ are zero, and let $2\le j_0\le 2^{d-2}$ be minimal with 
$a_{j_0}\neq 0$. If $j_0< 2^{d-2}$ then $d>3$. Since $\eta\beta$ and $\eta^{2^{d-2}+1}$
modulo $\hat{I}_{2,\hat{c}}$ lie in $\mathrm{Ker}(\pi_\Lambda)$ by $(\ref{eq:ohyeah3})$, 
we can multiply $(\ref{eq:desperate3})$
on the left with $\eta^{2^{d-2}-j_0}$ modulo $\hat{I}_{2,\hat{c}}$ 
to see that $\eta^{2^{d-2}}$ modulo $\hat{I}_{2,\hat{c}}$
lies in $\mathrm{Ker}(\pi_\Lambda)$. But this implies that all paths of length $2^{d-2}$ 
modulo $\hat{I}_{2,\hat{c}}$ lie in $\mathrm{Ker}(\pi_\Lambda)$, and
hence $\mathrm{rad}^{2^{d-2}}(\Lambda'_{2,c})=0$, which is a contradiction. 
Thus $a_j=0$ for $2\le j< 2^{d-2}$, and we obtain 
\begin{equation}
\label{eq:nice1a}
\pi_\Lambda\left(\beta\alpha\gamma-c_1 \,\eta^{2^{d-2}}\;
\mbox{modulo $\hat{I}_{2,\hat{c}}$}\right) = 0
\end{equation}
for some $c_1\in k$. Similarly, one sees that 
\begin{equation}
\label{eq:nice2a}
\pi_\Lambda\left(\eta^{2^{d-2}}-c_2 \,\beta\alpha\gamma\;\,
\mbox{modulo $\hat{I}_{2,\hat{c}}$}\right) = 0
\end{equation}
for some $c_2\in k$. 
If one of $c_1,c_2$ were zero, we could also choose the other one to be zero. This would imply that
the radical series length of the projective $\Lambda'_{2,c}$-module cover of $S_1$ is at most
$3$ (resp. $2^{d-2}$) if $d=3$ (resp. if $d>3$),
which is a contradiction.
Thus $c_1,c_2$ have to be non-zero. 
Using that $\Lambda_{2,c}/\mathrm{soc}(\Lambda_{2,c})$ is a string algebra, we see that
there is no uniserial $\Lambda_{2,c}$-module  with descending composition factors 
$$(T_0,T_0,T_1,T_0) \quad\mbox{(resp. $(T_0,T_1,T_0,T_0)$)}.$$
Because of the Morita equivalence between 
$\Lambda'_{2,c}$ and $\Lambda_{2,c}$ induced by $\overline{\pi}_\Lambda$, we see that
there is also no uniserial $\Lambda'_{2,c}$-module with descending composition factors
$$(S_0,S_0,S_1,S_0) \quad\mbox{(resp. $(S_0,S_1,S_0,S_0)$)}.$$
On the other hand, we can define a uniserial $\hat{\Lambda}_{2,\hat{c}}$-module
$M_w$ with $w=\gamma\beta\alpha$ (resp. $w=\alpha\gamma\beta$)
with such descending composition factors.
Hence we obtain in a similar way to $(\ref{eq:nice1a})$ and $(\ref{eq:nice2a})$
\begin{eqnarray}
\label{eq:nice3a}
\pi_\Lambda\left(\gamma\beta\alpha - c_3\, \alpha\gamma\beta\;\,
\mbox{modulo $\hat{I}_{2,\hat{c}}$}\right) &=& 0,\\
\label{eq:nice4a}
\pi_\Lambda\left(\alpha\gamma\beta-c_4\, \gamma\beta\alpha\;\,
\mbox{modulo $\hat{I}_{2,\hat{c}}$}\right)&=& 0
\end{eqnarray}
for certain $c_3,c_4\in k$. Moreover, since the radical series length of the projective 
$\Lambda'_{2,c}$-module cover of $S_0$ is $4$, we can argue as above that $c_3,c_4$ 
have to be non-zero. 
Therefore, there exist certain $c_1,c_3\in k^*$ such that
$\mathrm{Ker}(\pi_\Lambda)$ contains the image $\hat{J}_{c_1,c_3}$ in $\hat{\Lambda}_{2,\hat{c}}$ 
of the ideal 
$$J_{c_1,c_3}=\langle \alpha^2, \eta\beta,\gamma\eta,\beta\gamma,
\beta\alpha\gamma-c_1\eta^{2^{d-2}},\gamma\beta\alpha-c_3 \alpha\gamma\beta\rangle 
\;\subset \; kQ_2.$$
We can compute the $k$-dimension of $kQ_2/I_{2,c}=\Lambda_{2,c}$ (resp. of $kQ_2/J_{c_1,c_3}
\cong \hat{\Lambda}_{2,\hat{c}}/\hat{J}_{c_1,c_3}$) by counting $k$-linearly independent paths in 
$kQ_2$ modulo $I_{2,c}$ (resp. modulo $J_{c_1,c_3}$). Since these 
$k$-dimensions are equal, it follows that $\mathrm{Ker}(\pi_\Lambda)=\hat{J}_{c_1,c_3}$ and thus
$\Lambda'_{2,c}\cong kQ_2/J_{c_1,c_3}$. 
Because $\Lambda'_{2,c}$ is symmetric, 
we can argue in a similar way as for $\Lambda'_{1,c}$ that $c_3=1$.
Hence $\Lambda'_{2,c}\cong kQ_2/J_{c_1,1}$ for a certain $c_1\in k^*$.
Since the $k$-algebra homomorphism
\begin{eqnarray*}
\Lambda_{2,0}=kQ_2/I_{2,0} &\longrightarrow & kQ_2/J_{c_1,1}\\
z \quad \mbox{modulo $I_{2,0}$}&\mapsto & z\quad \mbox{modulo $J_{c_1,1}$}\qquad
\mbox{ for $z\in\{1_0,1_1,\alpha,\beta,\eta\}$},\\
\gamma\quad \mbox{modulo $I_{2,0}$}&\mapsto &
c_1^{-1}\,\gamma\quad \mbox{modulo $J_{c_1,1}$}
\end{eqnarray*}
is a $k$-algebra isomorphism for all $c_1\in k^*$, it follows that 
$\Lambda'_{2,c}\cong\Lambda_{2,0}$.
Therefore, the parameter $c$ must be equal to zero.
\end{proof}

\begin{cor}
\label{cor:principal}
Let $k$ be an algebraically closed field of characteristic $2$, and let $d\ge 3$ be an integer. 
Suppose $\hat{G}$ is a finite group with generalized quaternion 
Sylow $2$-subgroups of order $2^{d+1}$ such that $\hat{G}$ has no normal subgroups of
odd order. Let $Z(\hat{G})$ denote the center of $\hat{G}$ and define $G=\hat{G}/Z(\hat{G})$. 
Then $G$ has dihedral Sylow $2$-subgroups of order $2^d$. If the principal block $B$ of
$kG$ has precisely two isomorphism classes of simple modules, then $B$ is Morita equivalent 
to either $\Lambda_{1,0}$  or $\Lambda_{2,0}$.
\end{cor}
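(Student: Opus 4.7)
The plan is to reduce the corollary to Theorem \ref{thm:main} by realizing the principal blocks of $kG$ and $k\hat{G}$ as the pair $(B,\hat{B})$ required by that theorem. The key input is the Brauer--Suzuki theorem \cite{brsu}, already invoked in the introduction: since $\hat{G}$ has generalized quaternion Sylow $2$-subgroups and no normal subgroup of odd order, $Z(\hat{G})$ has order $2$. Thus $\hat{G}$ is genuinely a central extension of $G=\hat{G}/Z(\hat{G})$ by a group of order $2$, and the natural projection $\pi:k\hat{G}\to kG$ required in Theorem \ref{thm:main} is available.

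First I would identify the Sylow $2$-subgroups of $G$. If $\hat{P}$ is a Sylow $2$-subgroup of $\hat{G}$, then by Brauer--Suzuki $Z(\hat{G})$ coincides with $Z(\hat{P})$, the subgroup of order $2$ generated by the unique involution of $\hat{P}$. A standard presentation calculation shows that for $d\ge 3$ the quotient of a generalized quaternion group of order $2^{d+1}$ by its center is dihedral of order $2^d$; hence $\hat{P}/Z(\hat{G})$ is a Sylow $2$-subgroup of $G$ that is dihedral of order $2^d$, as asserted in the statement.

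Now let $\hat{B}$ be the principal block of $k\hat{G}$ and $B$ the principal block of $kG$; their defect groups are the Sylow $2$-subgroups just described, so $\hat{B}$ has generalized quaternion defect groups of order $2^{d+1}$ and $B$ has dihedral defect groups of order $2^d$. To check that $B$ is contained in the image of $\hat{B}$ under $\pi$, let $\hat{e}_0$ and $e_0$ denote the respective principal block idempotents. Then $\pi(\hat{e}_0)$ is a central idempotent of $kG$ that acts as the identity on the trivial $kG$-module (because $\hat{e}_0$ does, and the trivial $kG$-module inflates to the trivial $k\hat{G}$-module). Since $e_0$ is primitive among central idempotents of $kG$ and $e_0\pi(\hat{e}_0)$ is a nonzero idempotent that is either $0$ or $e_0$, we must have $e_0\pi(\hat{e}_0)=e_0$; hence $e_0\in\pi(\hat{B})$ and $B\subseteq\pi(\hat{B})$.

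With these facts in hand, and using the given assumption that $B$ has precisely two isomorphism classes of simple modules, Hypothesis \ref{hyp:setup} holds for $(G,B)$ and the hypotheses of Theorem \ref{thm:main} are met via $(\hat{G},\hat{B})$. Invoking that theorem yields $c=0$ and the Morita equivalence $B\sim\Lambda_{1,0}$ or $B\sim\Lambda_{2,0}$. The only mildly delicate point beyond citing Brauer--Suzuki is the central idempotent matching above, but this is a soft primitivity argument; everything else is routine once the center of $\hat{G}$ has been pinned down.
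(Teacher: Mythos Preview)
Your argument is correct and follows the same route as the paper: invoke Brauer--Suzuki to get $|Z(\hat{G})|=2$, then apply Theorem~\ref{thm:main} to the pair of principal blocks. You simply supply more detail than the paper does---identifying $Z(\hat{G})$ with $Z(\hat{P})$ to compute the Sylow $2$-subgroups of $G$, and spelling out the primitive-idempotent argument for $B\subseteq\pi(\hat{B})$---whereas the paper asserts both facts without elaboration.
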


\begin{proof}
By a result of Brauer and Suzuki \cite{brsu}, the center of $\hat{G}$ is generated by an element of order $2$.
Since the principal block $B$ of $kG$ is contained in the image of the principal block $\hat{B}$ of $k\hat{G}$ under
the natural projection $\pi:k\hat{G}\to kG$, Corollary \ref{cor:principal} follows from Theorem \ref{thm:main}.
\end{proof}

\begin{cor}
\label{cor:main}
Let $k$ be an algebraically closed field of characteristic $2$.
Suppose $q$ is an odd prime power and $B$ is the principal block of 
$k\,\mathrm{PGL}_2(\mathbb{F}_q)$. Further, let $2^d\ge 8$ be the maximal $2$-power dividing
$(q^2-1)$. Then $B$ is Morita equivalent to $\Lambda_{1,0}$ 
$($resp. $\Lambda_{2,0}$$)$ if $q\equiv 1\mod 4$ $($resp. $q\equiv 3\mod 4$$)$.
\end{cor}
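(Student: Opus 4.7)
The plan is to deduce the corollary from Corollary \ref{cor:principal} by exhibiting a suitable double cover $\hat{G}$ of $G = \mathrm{PGL}_2(\mathbb{F}_q)$. The discussion following Hypothesis \ref{hyp:setup} already notes that the principal block $B$ of $kG$ satisfies that hypothesis and is Morita equivalent to $\Lambda_{1,c}$ (resp.\ $\Lambda_{2,c}$) for some $c \in \{0,1\}$ when $q \equiv 1 \pmod 4$ (resp.\ $q \equiv 3 \pmod 4$); thus it remains only to force $c = 0$.

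I would take $\hat{G}$ to be the preimage of $\mathrm{PGL}_2(\mathbb{F}_q)$ in $\mathrm{SL}_2(\mathbb{F}_{q^2})$ under the natural projection $\mathrm{SL}_2(\mathbb{F}_{q^2}) \twoheadrightarrow \mathrm{PSL}_2(\mathbb{F}_{q^2})$, where $\mathrm{PGL}_2(\mathbb{F}_q)$ is viewed as a subgroup of $\mathrm{PSL}_2(\mathbb{F}_{q^2})$. The latter embedding exists because the natural inclusion $\mathrm{GL}_2(\mathbb{F}_q) \hookrightarrow \mathrm{GL}_2(\mathbb{F}_{q^2})$ descends to $\mathrm{PGL}_2$, and since $[\mathbb{F}_{q^2}:\mathbb{F}_q] = 2$ every element of $\mathbb{F}_q^\times$ is a square in $\mathbb{F}_{q^2}^\times$, so each class in $\mathrm{PGL}_2(\mathbb{F}_q)$ has a representative in $\mathrm{SL}_2(\mathbb{F}_{q^2})$. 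The resulting group $\hat{G}$ fits in a central extension
\[
1 \to \{\pm I\} \to \hat{G} \to \mathrm{PGL}_2(\mathbb{F}_q) \to 1,
\]
and in particular $\hat{G}/Z(\hat{G}) \cong \mathrm{PGL}_2(\mathbb{F}_q)$.

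I would then verify the hypotheses of Corollary \ref{cor:principal}. Since $q$ is odd, $q^2 \equiv 1 \pmod 8$, so $(q^4-1)_2 = (q^2-1)_2 (q^2+1)_2 = 2^d \cdot 2 = 2^{d+1}$; thus the Sylow $2$-subgroup of $\mathrm{SL}_2(\mathbb{F}_{q^2})$ has order $2^{d+1}$ and is generalized quaternion, by the standard structure theorem for Sylow $2$-subgroups of $\mathrm{SL}_2$ over fields of odd characteristic. On the other hand, $|\hat{G}|_2 = 2 \cdot |G|_2 = 2^{d+1}$ as well, so any Sylow $2$-subgroup of $\hat{G}$ is already a full Sylow $2$-subgroup of $\mathrm{SL}_2(\mathbb{F}_{q^2})$, hence generalized quaternion. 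Moreover, any normal subgroup of $\hat{G}$ of odd order meets $Z(\hat{G}) = \{\pm I\}$ trivially and injects into $\mathrm{PGL}_2(\mathbb{F}_q)$; but the latter has no nontrivial odd-order normal subgroup (for $q \ge 5$ its unique minimal normal subgroup is the simple group $\mathrm{PSL}_2(\mathbb{F}_q)$, and $\mathrm{PGL}_2(\mathbb{F}_3) \cong S_4$ has minimal normal subgroup $V_4$, both of even order). Corollary \ref{cor:principal} then applies to $\hat{G}$ and yields $c = 0$.

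The main obstacle is selecting the correct double cover. The naive candidate $\mathrm{GL}_2(\mathbb{F}_q)/Z_0$, where $Z_0$ is the unique index-$2$ subgroup of $Z(\mathrm{GL}_2(\mathbb{F}_q))$, does not always have generalized quaternion Sylow $2$-subgroups: for instance, when $q = 3$ it coincides with $\mathrm{GL}_2(\mathbb{F}_3)$, whose Sylow $2$-subgroup is semidihedral. Passing to the larger ambient group $\mathrm{SL}_2(\mathbb{F}_{q^2})$ and lifting from $\mathrm{PSL}_2(\mathbb{F}_{q^2})$ produces the correct cover uniformly across all odd prime powers $q$.
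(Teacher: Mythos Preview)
Your proposal is correct and follows essentially the same route as the paper: both lift $\mathrm{PGL}_2(\mathbb{F}_q)\hookrightarrow \mathrm{PSL}_2(\mathbb{F}_{q^2})$ to a double cover $\hat{G}\subset \mathrm{SL}_2(\mathbb{F}_{q^2})$, compare $2$-parts of orders to see that a Sylow $2$-subgroup of $\hat{G}$ is already Sylow in $\mathrm{SL}_2(\mathbb{F}_{q^2})$ and hence generalized quaternion, and then invoke Corollary~\ref{cor:principal}. Your write-up is in fact a bit more complete than the paper's: you give an elementary reason for the embedding (every element of $\mathbb{F}_q^\times$ is a square in $\mathbb{F}_{q^2}^\times$) in place of the paper's citation of Dickson, and you explicitly check the ``no odd normal subgroup'' hypothesis of Corollary~\ref{cor:principal}, which the paper leaves implicit.
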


\begin{proof}
By a result of Dickson (see for example \cite[Hauptsatz II.8.27]{hup}), the projective special linear group
$\mathrm{PSL}_2(\mathbb{F}_{q^2})$ contains a subgroup $H_q$ that is isomorphic to
$\mathrm{PGL}_2(\mathbb{F}_q)$. Consider the short exact sequence of groups
\begin{equation}
\label{eq:yay}
1\to \{\pm I\} \to \mathrm{SL}_2(\mathbb{F}_{q^2}) \xrightarrow{\sigma} 
\mathrm{PSL}_2(\mathbb{F}_{q^2}) \to 1
\end{equation}
where $I$ is the $2\times 2$ identity matrix in $\mathrm{SL}_2(\mathbb{F}_{q^2})$
and $\sigma$ is the natural surjection.
Define $\hat{G}_q$ to be the full preimage in $\mathrm{SL}_2(\mathbb{F}_{q^2})$ of $H_q$ under
the map $\sigma$. Then $\hat{G}_q/\{\pm I\}$ is isomorphic to $\mathrm{PGL}_2(\mathbb{F}_q)$.
Moreover, the center of $\hat{G}_q$ is $\{\pm I\}$, since the center of $H_q\cong
\mathrm{PGL}_2(\mathbb{F}_q)$ is trivial. The order of $\hat{G_q}$ is twice the order of
$\mathrm{PGL}_2(\mathbb{F}_q)$, and thus equal to $2\cdot q(q^2-1)$. 
Since the order of $\mathrm{SL}_2(\mathbb{F}_{q^2})$ is equal to
$q^2 (q^2-1) (q^2+1)$, it follows that the Sylow $2$-subgroups of $\mathrm{SL}_2(\mathbb{F}_{q^2})$
and the Sylow $2$-subgroups of $\hat{G}_q$ have order $2^{d+1}$. 
Because
$\mathrm{SL}_2(\mathbb{F}_{q^2})$ has generalized quaternion Sylow $2$-subgroups
(see for example \cite[Satz II.8.10]{hup}), it follows
that the Sylow $2$-subgroups of $\hat{G}_q$ are also generalized quaternion. 
Hence Corollary \ref{cor:main} is a consequence of Corollary \ref{cor:principal}.
\end{proof}

%%%%%%%%%%%%%%%%%%%%%%%%%%%%%%%%%%%%%%%%%%%%%%%%%%%%%%%%%
%% Bibliography
%%%%%%%%%%%%%%%%%%%%%%%%%%%%%%%%%%%%%%%%%%%%%%%%%%%%%%%%%

\end{document}